\author{Matthew D. Kvalheim}
\address[Kvalheim, Gustafson]{School of Engineering and Applied Science, University of Pennsylvania, Philadelphia, PA 19104, USA, \texttt{kvalheim@seas.upenn.edu}, \texttt{pgu@seas.upenn.edu}}
\author{Paul Gustafson}
\author{Samuel A. Burden}
\address[Burden]{Department of Electrical \& Computer Engineering, University of Washington, Seattle, WA 98195, USA, \texttt{sburden@uw.edu}}
\title{A pasting lemma for Lipschitz functions}
\newcommand{\concept}[1]{\textbf{#1}}
\newcommand{\R}{\mathbb{R}}
\newcommand{\slot}{\,\cdot\,} 
\newcommand{\cl}{\textnormal{cl}}
\newcommand{\Lip}[1]{\textnormal{Lip}(#1)}
\newcommand{\distNA}{d}  
\newcommand{\dist}[2]{\distNA( #1, #2)}
\newcommand{\distSNA}{\delta}  
\newcommand{\distS}[2]{\distSNA( #1, #2)} 
\DeclarePairedDelimiter\norm{\lVert}{\rVert}
\newtheorem{Th}{Theorem}
\newtheorem{Prop}{Proposition}
\theoremstyle{definition}
\newtheorem*{Def*}{Definition}
\newtheorem{Rem}{Remark}
\newcommand{\thistheoremname}{}
\newtheorem*{genericthm}{\thistheoremname}
{\renewcommand{\thistheoremname}{Theorem~\ref{#1}$'$}%
	\begin{genericthm}}
	{\end{genericthm}}
\begin{document}
	
\maketitle
\begin{abstract}
We give a necessary and sufficient condition ensuring that any function which is separately Lipschitz on two fixed compact sets is Lipschitz on their union.
\end{abstract}	
	
The pasting \cite[Thm~18.3]{munkres2000topology} (or gluing \cite[Lem.~3.23]{lee2010topological}) lemma asserts that a function which is separately continuous on two closed sets is continuous on their union.
To quote \cite[p.~7]{weaver2018lipschitz}: \begin{quote}
``...there is no gluing lemma for Lipschitz functions. A function
which is separately Lipschitz on two closed sets, even on two compact sets,
need not be Lipschitz on their union.''
\end{quote}
A simple example of this phenomenon is given in \cite[Ex.~1.12]{weaver2018lipschitz}.
However, in this note we give a necessary and sufficient condition for a pair of compact sets to satisfy the Lipschitz analog of the pasting lemma for any separately Lipschitz function on the pair (Theorem~\ref{th:lipschitz-pasting}).
We then show that this condition is local (Prop.~\ref{prop:lp-local}) and give a broad class of examples for which it is satisfied (Prop.~\ref{prop:manifold-transverse}).

Let $(X,\distNA)$ and $(Y,\distSNA)$ be  metric spaces.
Given a map $f\colon X\to Y$, we define
$$\Lip{f}\coloneqq \sup_{x_1\neq x_2}\frac{\distS{f(x_1)}{f(x_2)}}{\dist{x_1}{x_2}} \in [0,\infty]$$
and say that $f$ is \concept{Lipschitz} if $\Lip{f}<\infty$.
We say that a pair $(A,B)$ of compact subsets of $X$ are \concept{LP} (for ``Lipschitz paste-able'') if  
they are disjoint, or if they are not disjoint and there is $K > 0$ such that
\begin{equation}\label{eq:lp}
\forall a\in A, b\in B\colon \inf_{x\in A\cap B}[\dist{a}{x} +  \dist{x}{b}]\leq K \dist{a}{b}.
\end{equation}
By taking $x = a$ and $K=1$, we see that $(A,B)$ are LP when $A = B$.
We say that the \concept{Lipschitz pasting lemma holds} for $(A,B)$ if, for any metric space $(Y,\distSNA)$ and map $f\colon A\cup B\to Y$, $f$ is Lipschitz if $f|_A$ and $f|_B$ are Lipschitz.

\begin{Th}\label{th:lipschitz-pasting}
Let $(A,B)$ be a pair of compact subsets of a metric space $(X,\distNA)$.
Then the Lipschitz pasting lemma holds for $(A,B)$ if and only if $(A,B)$ are LP.
\end{Th}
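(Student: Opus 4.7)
The plan is to establish both directions. For sufficiency, assume $(A,B)$ is LP with constant $K$ and let $f \colon A \cup B \to Y$ with $L_A = \Lip{f|_A}$ and $L_B = \Lip{f|_B}$ finite. The Lipschitz ratio on same-set pairs is already controlled, so only cross-pairs with $a \in A, b \in B$ require attention. The disjoint case is immediate from compactness, since $\distNA(A,B) > 0$ while $f(A\cup B)$ is bounded. In the non-disjoint case, compactness of $A \cap B$ together with continuity of $x \mapsto \dist{a}{x} + \dist{x}{b}$ lets me replace the infimum in \eqref{eq:lp} by a minimum attained at some $x^\star \in A \cap B$; the triangle inequality for $\distSNA$, the separate Lipschitz bounds, and the LP inequality at $x^\star$ then combine to give
$$\distS{f(a)}{f(b)} \leq L_A \dist{a}{x^\star} + L_B \dist{x^\star}{b} \leq K \max(L_A, L_B) \, \dist{a}{b}.$$

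For necessity, I would argue by contrapositive. If $(A,B)$ is not LP then $A \cap B \neq \emptyset$, and negating \eqref{eq:lp} supplies sequences $a_n \in A, b_n \in B$ with
$$\inf_{x \in A \cap B}[\dist{a_n}{x} + \dist{x}{b_n}] > n \, \dist{a_n}{b_n}.$$
The crux is exhibiting a separately Lipschitz $f \colon A \cup B \to \R$ that is not Lipschitz on the union. I would take the distance-to-intersection function
$$f(x) = \begin{cases} 0 & \text{if } x \in A, \\ \dist{x}{A \cap B} & \text{if } x \in B, \end{cases}$$
which is well-defined since both rules give $0$ on $A \cap B$, has $\Lip{f|_A} = 0$, and satisfies $\Lip{f|_B} \leq 1$ as a distance function. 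To force $\Lip{f} = \infty$, choose $x_n^\star \in A \cap B$ realising $\dist{b_n}{A \cap B}$; the failing inequality combined with $\dist{a_n}{x_n^\star} \leq \dist{a_n}{b_n} + \dist{b_n}{x_n^\star}$ yields $\dist{b_n}{A \cap B} > (n-1)\dist{a_n}{b_n}/2$, so $|f(a_n) - f(b_n)|/\dist{a_n}{b_n} \to \infty$.

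The main obstacle I anticipate is isolating the correct counterexample for the necessity direction; once the distance-to-$A \cap B$ function is chosen, the remaining work reduces to two triangle inequalities. The sufficiency direction is essentially routine, modulo the compactness observation that promotes the LP infimum to a minimum and hence legitimises the interpolation step.
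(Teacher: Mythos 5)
Your proof is correct, and while your sufficiency direction is essentially the paper's argument (the paper does not even need the infimum in \eqref{eq:lp} to be attained: it applies the triangle inequality for every $x\in A\cap B$ and then takes the infimum of the resulting bound, so your appeal to compactness there is a cosmetic difference), your necessity direction takes a genuinely different route. The paper proves necessity directly by applying the assumed pasting property to the identity map from $(A\cup B,\distNA)$ into $A\cup B$ equipped with the auxiliary ``shortcut'' metric \eqref{eq:special-metric}, which routes cross-distances through $A\cap B$; this requires verifying that \eqref{eq:special-metric} is indeed a metric (symmetry, positive-definiteness, triangle inequality by cases). Your contrapositive instead exhibits a single explicit real-valued witness, $f\equiv 0$ on $A$ and $f(x)=\distNA(x,A\cap B)$ on $B$, which is well defined and separately Lipschitz, and your two estimates --- the negated \eqref{eq:lp} evaluated at a nearest point $x_n^\star\in A\cap B$ to $b_n$, plus one triangle inequality --- correctly give $|f(a_n)-f(b_n)|>\tfrac{n-1}{2}\dist{a_n}{b_n}$, hence $\Lip{f}=\infty$. (One small point worth making explicit: the failing pairs automatically satisfy $\dist{a_n}{b_n}>0$, since $a_n=b_n$ would force $a_n\in A\cap B$ and make the left-hand infimum vanish; this justifies forming the ratios.) Your route buys two things: it avoids checking any metric axioms, and it shows the sharper fact that the pasting property for $\R$-valued functions alone already forces LP, so the theorem's equivalence survives even if the target is restricted to $\R$. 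What the paper's construction buys in exchange is universality: every separately Lipschitz map is automatically Lipschitz with respect to the shortcut metric, so testing the identity map into that one extremal target tests all maps and all targets at once.
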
   

\begin{proof}
Assume that $(A,B)$ are LP and let $f\colon A\cup B\to Y$ be any map to a metric space $(Y,\distSNA)$ such that $f|_A$ and $f|_B$ are Lipschitz.
We need to show that $f$ is Lipschitz.
If $A\cap B = \varnothing$ then, since $A$ and $B$ are compact and hence separated by a positive distance, $f$ is Lipschitz.
If instead $A\cap B\neq \varnothing$, it suffices to exhibit $L>0$ such that
\begin{equation}\label{eq:lipschitz-pasting-desired}
\forall a\in A, b\in B\colon \distS{f(a)}{f(b)}\leq L \dist{a}{b}
\end{equation}
since this will imply that $f$ is Lipschitz with $\Lip{f}\leq \max(L,\Lip{f|_A},\Lip{f|_B})$.
Defining $L_0\coloneqq \max(\Lip{f|_A}, \Lip{f|_B})$, the triangle inequality implies that
\begin{align*}
\forall x\in A\cap B\colon \distS{f(a)}{f(b)}\leq \distS{f(a)}{f(x)} + \distS{f(x)}{f(b)}\leq L_0\cdot [\dist{a}{x} + \dist{x}{b}]. 
\end{align*}
Taking the infimum over all $x\in A\cap B$ yields
\begin{align}\label{eq:lipschitz-pasting-2}
\distS{f(a)}{f(b)}\leq L_0\inf_{x\in A\cap B}[\dist{a}{x} + \dist{x}{b}]\leq L_0K \dist{a}{b},
\end{align}
where the second inequality follows from \eqref{eq:lp}.
Thus, \eqref{eq:lipschitz-pasting-desired} is satisfied with $L\coloneqq L_0 K$.

Conversely, assume that the Lipschitz pasting lemma holds for $(A,B)$. 
If $A\cap B = \varnothing$ then $(A,B)$ are LP, so assume that $A\cap B\neq \varnothing$.  
We define $Y\coloneqq A\cup B$ and observe that $\delta:Y\times Y\rightarrow [0,\infty)$ defined for all $x_1,x_2\in Y$ by
\begin{equation}\label{eq:special-metric}
\begin{split}
  & \distS{x_1}{x_2}\coloneqq \begin{cases}
\dist{x_1}{x_2}, &x_1,x_2\in A \textnormal{ or } x_1,x_2\in B\\
\inf_{x\in A\cap B}[\dist{x_1}{x} + \dist{x}{x_2}], & \textnormal{otherwise}
\end{cases}
\end{split}
\end{equation}
is a metric on $Y$. (Symmetry is immediate,  positive-definiteness follows since $\distNA \leq \distSNA$, and the triangle inequality is verified by checking cases).
Let $f\colon (A\cup B,\distNA|_{(A\cup B)^2})\to (Y,\distSNA)$ be the identity when viewed as a map of sets.
Then for all $a_1,a_2\in A$, $b_1,b_2\in B$
$$\distS{f(a_1)}{f(a_2)}= \distS{a_1}{a_2} = \dist{a_1}{a_2} \quad \textnormal{and} \quad \distS{f(b_1)}{f(b_2)}=\distS{b_1}{b_2}=\dist{b_1}{b_2}$$ 
by the definition of $\distSNA$, so $f|_A$ and $f|_B$ are Lipschitz.
Since the Lipschitz pasting lemma holds for $(A,B)$, it follows that $f$ is Lipschitz.
Hence there is $K > 0$ such that, for all $a\in A$ and $b\in B$,
\begin{equation}
\inf_{x\in A\cap B}[\dist{a}{x} + \dist{x}{b}] = \distS{a}{b} = \distS{f(a)}{f(b)} \leq K \dist{a}{b}.
\end{equation}
It follows that $(A,B)$ are LP, as desired.
\end{proof}

The following result shows that the property of being LP is local.
\begin{Prop}[LP is a local property]\label{prop:lp-local}
Let $(A,B)$ be a pair of compact subsets of a metric space $(X,\distNA)$.
Then $(A,B)$ are LP if and only if for every $y\in A\cap B$ there is an open neighborhood $U$ of $y$ and $K > 0$ such that 
\begin{equation}\label{eq:lp-local}
\forall a\in A\cap U, b\in B\cap U\colon \inf_{x\in A\cap B\cap U}[\dist{a}{x} +  \dist{x}{b}]\leq K \dist{a}{b}.
\end{equation}
\end{Prop}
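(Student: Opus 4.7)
The forward direction is essentially tautological: if $(A,B)$ are LP then the local condition holds for every $y\in A\cap B$ by taking $U = X$, since then $A\cap B\cap U = A\cap B$ and \eqref{eq:lp-local} collapses to \eqref{eq:lp}.

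For the converse I plan to argue by contradiction and exploit the compactness of $A$ and $B$. Assume the local condition, assume $A\cap B\neq \varnothing$ (otherwise LP is immediate by definition), and suppose that \eqref{eq:lp} fails for every $K > 0$. Specializing to $K = n$ produces sequences $a_n\in A$ and $b_n\in B$ with
\[
\inf_{x\in A\cap B}[\dist{a_n}{x}+\dist{x}{b_n}] > n\,\dist{a_n}{b_n}.
\]

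The key observation is that $\dist{a_n}{b_n}\to 0$. Fixing any $x_0\in A\cap B$, the infimum above is bounded by $\dist{a_n}{x_0}+\dist{x_0}{b_n}\leq 2\,\textnormal{diam}(A\cup B)$, which is finite by compactness, so dividing by $n$ yields the claim (and in particular $\dist{a_n}{b_n}>0$, since otherwise $a_n=b_n\in A\cap B$ would kill the infimum). By compactness of $A$ and $B$ I then pass to convergent subsequences $a_n\to a^\ast$ and $b_n\to b^\ast$; since $\dist{a_n}{b_n}\to 0$, $a^\ast = b^\ast =: y\in A\cap B$.

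Applying the local hypothesis at $y$ now produces an open neighborhood $U$ of $y$ and $K>0$ satisfying \eqref{eq:lp-local}. For $n$ large enough we have $a_n,b_n\in U$, and the trivial inclusion $A\cap B\cap U\subseteq A\cap B$ gives
\[
\inf_{x\in A\cap B}[\dist{a_n}{x}+\dist{x}{b_n}] \leq \inf_{x\in A\cap B\cap U}[\dist{a_n}{x}+\dist{x}{b_n}] \leq K\,\dist{a_n}{b_n},
\]
which contradicts the standing inequality as soon as $n > K$. The step that requires the most care, and which I expect to be the main obstacle, is the diameter-bound argument forcing $\dist{a_n}{b_n}\to 0$: this is what places the common subsequential limit inside $A\cap B$ and allows the local hypothesis to bite, and without it the compactness extraction alone would be useless.
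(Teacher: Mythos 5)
Your argument is correct, but it takes a genuinely different route from the paper's. You prove the converse by contradiction: a failure of \eqref{eq:lp} for every $K$ yields sequences $a_n\in A$, $b_n\in B$ with $\inf_{x\in A\cap B}[\dist{a_n}{x}+\dist{x}{b_n}]>n\,\dist{a_n}{b_n}$; the uniform bound $2\,\mathrm{diam}(A\cup B)$ on the numerator forces $\dist{a_n}{b_n}\to 0$, sequential compactness and closedness of $A$ and $B$ give a common subsequential limit $y\in A\cap B$, and a single application of the local hypothesis at $y$ (together with $A\cap B\cap U\subseteq A\cap B$) contradicts the blow-up once $n>K$; your side remark that $\dist{a_n}{b_n}>0$ is exactly what legitimizes that last comparison. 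The paper instead argues directly: it covers the compact set $A\cap B$ by finitely many of the neighborhoods $U_1,\dots,U_n$ with constants $K_i$, and splits the supremum of the ratio over $\{(a,b)\in A\times B\colon a\neq b\}$ into the part inside $\Delta=\bigcup_i U_i\times U_i$, which is bounded by $\max_i K_i$ via \eqref{eq:lp-local}, and the part over the compact set $C=(A\times B)\setminus\Delta$, where the numerator is bounded and $\dist{a}{b}$ is bounded away from zero because $C$ misses the diagonal. The trade-off: the paper's covering argument is constructive and yields an explicit admissible constant, while your contradiction argument is shorter, invokes the hypothesis only at the single limit point produced by the sequences, and avoids the product-space bookkeeping, at the price of producing no explicit $K$. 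Both proofs handle the trivial cases (taking $U=X$ for the forward direction, and $A\cap B=\varnothing$ for the converse) identically.
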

\begin{proof}
By taking $U = X$ for each $y\in A\cap B$, it is immediate that $(A,B)$ satisfy the property from the proposition if $(A,B)$ are LP.
 
Conversely, assume that for every $y\in A\cap B$ there is an open neighborhood  $U$ of $y$ and constant $K> 0$ such that \eqref{eq:lp-local} holds.
If $A\cap B = \varnothing$ then $(A,B)$ are LP, so assume that $A\cap B \neq \varnothing$.
Since $A\cap B$ is compact there are finitely many $y_1,\ldots, y_n\in A\cap B$ with corresponding $K_1,\ldots, K_n$ and $U_1,\ldots, U_n$ satisfying $A\cap B \subset \bigcup_i U_i$.
Define the compact set $C\coloneqq (A\times B) \setminus \Delta$ where $\Delta \coloneqq \bigcup_i U_i \times U_i$.  
Then
\begin{align*}
  \sup_{\substack{a\in A, b\in B\\ a\neq b}} & \frac{\inf_{x\in A\cap B}\dist{a}{x}+\dist{x}{b}}{\dist{a}{b}} \\
  \leq &  \sup_{(a,b)\in C}\frac{\inf_{x\in A\cap B}\dist{a}{x}+\dist{x}{b}}{\dist{a}{b}} + \sup_{\substack{(a,b) \in (A \times B) \cap \Delta \\ a\neq b}}\frac{\inf_{x\in A\cap B}\dist{a}{x}+\dist{x}{b}}{\dist{a}{b}} \\
  \leq &  \sup_{(a,b)\in C}\frac{\inf_{x\in A\cap B}\dist{a}{x}+\dist{x}{b}}{\dist{a}{b}} + \max_i \sup_{\substack{a\in A \cap U_i, b \in B \cap U_i \\ a \neq b}}\frac{\inf_{x\in A\cap B \cap U_i }\dist{a}{x}+\dist{x}{b}}{\dist{a}{b}},
\end{align*}
where the first and second inequalities use the fact that the supremum over a set equals the maximum of the suprema over any finite cover, and the second inequality additionally uses the fact that the infimum over a set is no larger than the infimum over any subset.
The second summand in the final line is bounded by $\max_i K_i$, and the first is bounded since  $\distNA$ is bounded away from zero on the compact set $C$ (which is disjoint from the diagonal $\{(z,z)\colon z\in A\cup B\}$).  It follows that $(A,B)$ are LP.
\end{proof}

The following result establishes a class of examples arising in differential geometry \cite{lee2013smooth,lee2018riemannian} for which the LP property holds.
\begin{Prop}\label{prop:manifold-transverse}
Let $X$ be a connected smooth Riemannian manifold with $\distNA\colon X\times X\to [0,\infty)$ the induced path-length metric.
Let $A,B\subset X$ be compact boundaryless $C^1$ embedded submanifolds with transverse intersection.
Then $(A,B)$ are LP.
\end{Prop}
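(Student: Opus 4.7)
The strategy is to localize via Proposition~\ref{prop:lp-local} and reduce to a one-line Euclidean estimate using a simultaneous $C^1$ straightening of $A$ and $B$ around each intersection point.

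By Proposition~\ref{prop:lp-local}, it suffices to verify the local condition \eqref{eq:lp-local} at each $y \in A \cap B$. At such a $y$, transversality gives $T_y A + T_y B = T_y X$, so the standard local normal form for a pair of $C^1$ submanifolds meeting transversely produces a $C^1$ chart $\varphi\colon U \to \R^p \times \R^q \times \R^k$ centered at $y$, with $\varphi(U)$ an open ball about the origin and $p + q + k = \dim X$, $k = \dim(A \cap B)$, such that
\[
\varphi(A \cap U) = \varphi(U) \cap (\R^p \times \{0\} \times \R^k), \quad \varphi(B \cap U) = \varphi(U) \cap (\{0\} \times \R^q \times \R^k),
\]
and hence $\varphi(A \cap B \cap U) = \varphi(U) \cap (\{0\} \times \{0\} \times \R^k)$.

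Shrinking $U$ if necessary, the Riemannian path-length metric $\distNA$ and the pull-back of the Euclidean metric under $\varphi$ are bi-Lipschitz equivalent on $U$ with some constant $C \geq 1$, so it suffices to verify \eqref{eq:lp-local} with the Euclidean metric in place of $\distNA$; that yields the Riemannian version with $K = \sqrt{2}\,C^2$. Writing $\varphi(a) = (u, 0, w_1)$ and $\varphi(b) = (0, v, w_2)$, the natural candidate is $\varphi(x) = (0, 0, w_1)$, which lies in $\varphi(A \cap B \cap U)$ because $\varphi(U)$ is a ball about the origin. Then
\[
\|\varphi(a) - \varphi(x)\| + \|\varphi(x) - \varphi(b)\| = |u| + \sqrt{|v|^2 + |w_1 - w_2|^2} \leq \sqrt{2}\,\|\varphi(a) - \varphi(b)\|
\]
by Cauchy-Schwarz, finishing the verification.

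The only step that is more than routine is the simultaneous straightening of two transverse $C^1$ submanifolds. I would obtain it by first choosing a $C^1$ submanifold chart adapted to $A$ near $y$, viewing $B$ in those coordinates, and then applying the implicit function theorem inside a transverse slice to write $B$ as a graph; surjectivity of the relevant derivative is exactly the transversality hypothesis. Everything else in the argument is either a direct invocation of Proposition~\ref{prop:lp-local}, a standard compactness-based comparison of metrics in a coordinate chart, or the elementary Euclidean inequality above.
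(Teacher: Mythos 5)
Your proposal is correct and follows essentially the same route as the paper: localize via Prop.~\ref{prop:lp-local}, simultaneously straighten $A$ and $B$ in a $C^1$ chart furnished by transversality, compare the Riemannian path-length metric with the Euclidean chart metric on a small precompact neighborhood, and conclude with an elementary Euclidean estimate. The only cosmetic differences are your choice of the comparison point (keeping the $w$-coordinate of $a$, yielding the constant $\sqrt{2}$ rather than the paper's $\sqrt{3}$ obtained from a point on the segment joining $w(a)$ and $w(b)$) and that the paper secures the bi-Lipschitz comparison explicitly via a geodesically convex neighborhood, whereas you invoke it as a standard compactness fact.
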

\begin{proof}
If $A\cap B = \varnothing$ then $(A,B)$ are LP, so assume that $A\cap B\neq \varnothing$.
Fix an arbitrary $y\in A\cap B$.
By Prop.~\ref{prop:lp-local} it suffices to find an open neighborhood $U$ of $y$ and $K > 0$ such that \eqref{eq:lp-local} holds.

Let $n$ be the dimension of $X$.
Transversality implies the existence of precompact open sets $U$, $V$, $W$ and a $C^1$ diffeomorphism $\varphi = (u,v,w)\colon W\to \R^{n}$ such that $V$ is geodesically convex \cite[p.~166]{lee2018riemannian}, $\varphi(U)$ is an open Euclidean ball, $$y\in U\subset \cl(U)\subset V\subset \cl(V) \subset W,$$ $\varphi(y)=0$, $\varphi(A\cap W) = \{u=0\}$ and $\varphi(B\cap W)= \{v=0\}$.
Let $\norm{\slot}$ denote the standard Euclidean norm on $\R^n$.
For any $a\in A \cap W$ and $b\in B\cap W$, subadditivity of $\sqrt{\slot}$ and the Cauchy-Schwarz inequality imply that
\begin{equation*}
\begin{split}
\norm{\varphi(a)-\varphi(b)}&= \sqrt{\norm{v(a)}^2+\norm{u(b)}^2+\norm{w(a)-w(b)}^2}\\
&\leq \norm{v(a)}+\norm{u(b)}+\norm{w(a)-w(b)}\\
&= (1,1,1)\cdot (\norm{v(a)},\norm{u(b)}, \norm{w(a)-w(b)} )\\
&\leq \sqrt{3} \norm{\varphi(a)-\varphi(b)}.
\end{split}
\end{equation*}
Hence for any $a\in A\cap U$, $b\in B\cap U$
\begin{equation}\label{eq:manifold-transverse-2}
\begin{split}
\inf_{x\in A\cap B\cap U}\norm{\varphi(a)-\varphi(x)} + \norm{\varphi(x)-&\varphi(b)} \leq \norm{v(a)} + \norm{u(b)}\\ 
&+ \inf_{x\in A\cap B\cap U}\norm{w(a)-w(x)} + \norm{w(x)-w(b)}\\
&= \norm{v(a)} + \norm{u(b)} + \norm{w(a)-w(b)}\\
& \leq \sqrt{3} \norm{\varphi(a)-\varphi(b)}
\end{split}
\end{equation}
(The equality follows from the fact that the second infimum is attained when $w(x)$ is any point on the line segment joining $w(a)$ and $w(b)$, which is in turn a consequence of elementary geometric considerations.)
On the other hand, $\varphi(\cl(V))$ is compact, and all Riemannian metrics are uniformly equivalent on compact sets.
This together with geodesic convexity of $V\supset U$ implies the existence of $K_0>0$ such that, for all $a\in A\cap U$ and $b\in B\cap U$, 
\begin{equation}\label{eq:manifold-transverse-3}
\begin{split}
\frac{1}{K_0}\dist{a}{b}&\leq \norm{\varphi(a)-\varphi(b)}\leq K_0 \dist{a}{b}
\end{split}
\end{equation}
Taking $K\coloneqq K_0^2 \sqrt{3}$, \eqref{eq:lp-local} follows from \eqref{eq:manifold-transverse-2} and \eqref{eq:manifold-transverse-3}.
By the first paragraph of the proof we are done.
\end{proof}

\section*{Acknowledgments}
Kvalheim was supported in part by the Army Research Office (ARO) under the SLICE Multidisciplinary University Research Initiatives (MURI) Program, award W911NF1810327. 
Gustafson was supported in part by UATL 10601110D8Z, a LUCI Fellowship held by Jared Culbertson, granted by the Basic Research Office of the US Undersecretary of Defense for Research and Engineering.
Kvalheim and Gustafson were also supported in part by ONR N000141612817, a Vannevar Bush Faculty Fellowship held by Daniel E. Koditschek,  sponsored by the Basic Research Office of the Assistant Secretary of Defense for Research and Engineering.
Burden was supported by National Science Foundation Cyber-Physical Systems Program Award \#1836819.

\bibliographystyle{amsalpha}
\bibliography{ref}
	
\end{document}